\newcounter{count}
\numberwithin{count}{section}
\newtheorem{Lemma}[count]{Lemma}
\newtheorem{Theorem}[count]{Theorem}
\begin{document}

\author[T.~H.~Nguyen]{Thu Hien Nguyen}
\address{Department of Mathematics \& Computer Sciences, 
V.~N.~Karazin Kharkiv National University,
4 Svobody Sq., Kharkiv, 61022, Ukraine}
\email{nguyen.hisha@karazin.ua}

\author[A.~Vishnyakova]{Anna Vishnyakova}
\address{Department of Mathematics \& Computer Sciences, V.~N.~Karazin Kharkiv National University,
4 Svobody Sq., Kharkiv, 61022, Ukraine}
\email{anna.m.vishnyakova@univer.kharkov.ua}

\title[Entire functions of the Laguerre--P\'olya class]
{On the entire functions from the Laguerre--P\'olya I class 
having the increasing  second quotients of Taylor coefficients }

\begin{abstract}

We prove that if $f(x) = \sum_{k=0}^\infty  a_k x^k,$
$a_k >0,  $ is an entire function 
such that the sequence $Q := \left( \frac{a_k^2}{a_{k-1}a_{k+1}}  \right)_{k=1}^\infty$ 
is non-decreasing and $\frac{a_1^2}{a_{0}a_{2}} \geq 2\sqrt[3]{2},$
then all but a finite number of zeros of $f$ are real and
simple. We also present a  criterion in terms of the closest to zero roots for such 
a function to have only real zeros (in other words, for belonging to the Laguerre--P\'olya 
class of type I)  under additional assumption on the sequence $Q.$ 

\end{abstract}

\keywords {Laguerre--P\'olya class; Laguerre--P\'olya class of type I; entire functions of order zero; real-rooted 
polynomials; multiplier sequences; complex zero decreasing sequences}

\subjclass[2010]{30C15; 30D15; 30D35; 26C10}

\maketitle

\section{Introduction}

In this paper, we study conditions under which special entire functions with
positive Taylor coefficients have only real zeros. 
First, we need the definition of two well-known classes of entire functions: the 
Laguerre--P\'olya class and the Laguerre--P\'olya class of type I.

{\bf Definition 1}. {\it A real entire function $f$ is said to be in the {\it
Laguerre--P\'olya class}, written $f \in \mathcal{L-P}$, if it can
be expressed in the form
\begin{equation}
\label{lpc}
 f(x) = c x^n e^{-\alpha x^2+\beta x}\prod_{k=1}^\infty
\left(1-\frac {x}{x_k} \right)e^{xx_k^{-1}},
\end{equation}
where $c, \alpha, \beta, x_k \in  \mathbb{R}$, $x_k\ne 0$,  $\alpha \ge 0$,
$n$ is a nonnegative integer and $\sum_{k=1}^\infty x_k^{-2} < \infty$. 

A real entire function $f$ is said to be in the {\it Laguerre--
P\'olya class of type I}, 
written $f \in \mathcal{L-P} I$, if it can
be expressed in the following form  
\begin{equation}  \label{lpc1}
 f(x) = c x^n e^{\beta x}\prod_{k=1}^\infty
\left(1+\frac {x}{x_k} \right),
\end{equation}
where $c \in  \mathbb{R},  \beta \geq 0, x_k >0 $, 
$n$ is a nonnegative integer,  and $\sum_{k=1}^\infty x_k^{-1} <
\infty$. 
As usual, the product on the right-hand side can be
finite or empty (in the latter case the product equals 1).}

These classes are important for the theory of entire functions since 
the polynomials with only real zeros (only real and nonnegative zeros) converge locally 
uniformly to these and only these functions. 
The following  prominent theorem 
states an even  stronger fact. 

{\bf Theorem A} (E.~Laguerre and G.~P\'{o}lya, see, for example,
\cite[p. ~42--46]{HW}) and \cite[chapter VIII, \S 3]{lev}). {\it   

(i) Let $(P_n)_{n=1}^{\infty},\  P_n(0)=1, $ be a sequence
of complex polynomials having only real zeros which  converges uniformly on the disc 
$|z|\leq A, A > 0.$ Then this sequence converges locally uniformly to an entire function 
from the $\mathcal{L-P}$ class.

(ii) For any $f \in \mathcal{L-P}$ there exists a sequence of complex polynomials 
with only real zeros which converges locally uniformly to $f$.

(iii) Let $(P_n)_{n=1}^{\infty},\  P_n(0)=1, $ be a sequence
of complex polynomials having only real negative zeros which  
converges uniformly on the disc $|z| \leq A, A > 0.$ Then this 
sequence converges locally uniformly to an entire function
 from the class $\mathcal{L-P}I.$
 
(iv) For any $f \in \mathcal{L-P}I$ there is a
sequence of complex polynomials with only real nonpositive 
zeros which  converges locally uniformly to $f$.}

For  interesting properties and characterizations of the
Laguerre--P\'olya   class and the Laguerre--P\'olya class of type I,  see 
\cite[p. 100]{pol}, \cite{polsch}  or  \cite[Kapitel II]{O} (also see the
survey \cite{iv} on the zero distribution of  entire functions, its sections and tails).
Note that for a real entire function (not identically zero) of the order less than $2$ having 
only real zeros is  equivalent to belonging to the Laguerre--P\'olya class. Also, 
for a real entire function with positive coefficients of the order 
less than $1$ having only real zeros is  equivalent to belonging to the Laguerre--P\'olya 
class of type I.

Let  $f(z) = \sum_{k=0}^\infty a_k z^k$  be an entire function with 
real nonzero coefficients. We define the quotients $p_n$ and $q_n$:
\begin{align*}
p_n=p_n(f)&:=\frac{a_{n-1}}{a_n}, \quad n\geq 1, \\
q_n=q_n(f)&:= \frac{p_n}{p_{n-1}} = \frac{a_{n-1}^2}{a_{n-2}a_n}, \quad n\geq 2.
\end{align*}

From these definitions it readily follows that
\begin{align*}
& a_n=\frac{a_0}{p_1p_2\cdots p_n}, \quad n\geq 1, \\
& a_n = a_1\Big(\frac{a_1}{a_0} \Big)^{n-1} \frac{1}{q_2^{n-1}q_3^{n-2}
\cdot \ldots \cdot q_3^2 q_2}, \quad n\geq 2.
\end{align*}

It is not trivial to understand whether a given entire function has only real zeros. 
In 1926, J. I. Hutchinson found the following 
sufficient condition for an entire function with positive coefficients to 
have only real zeros.

{\bf Theorem B} (J. ~I. ~Hutchinson, \cite{hut}). { \it Let $f(z)=
\sum_{k=0}^\infty a_k z^k$, $a_k > 0$ for all $k$. 
Then $q_n(f)\geq 4$, for all $n\geq 2,$  
if and only if the following two conditions are fulfilled:\\
(i) The zeros of $f(z)$ are all real, simple and negative, and \\
(ii) the zeros of any polynomial $\sum_{k=m}^n a_kz^k$, $m < n,$  formed 
by taking any number 
of consecutive terms of $f(z) $, are all real and non-positive.}

For some extensions of Hutchinson's results see,
for example, \cite[\S4]{cc1}.

The entire function $g_a(z) =\sum _{k=0}^{\infty} z^k a^{-k^2}$, $a>1,$
known as a \textit{partial theta-function},  was investigated in many works and has a 
prominent role. We can also observe that $q_n(g_a)=a^2$ for all $n.$ The  
survey \cite{War} by S.~O.~Warnaar contains the history of
investigation of the partial theta-function and some of its main properties.

In \cite{klv} it is shown that  for every $n\geq 2$ there exists a constant $c_n >1$ 
such that  $S_{n}(z,g_a):=\sum _{j=0}^{n} z^j a^{-j^2} \in \mathcal{L-P}$ 
$ \Leftrightarrow \ a^2 \geq c_n$.

{\bf Theorem C} (O. ~Katkova, T. ~Lobova, A. ~Vishnyakova, \cite{klv}).  {\it There exists a constant 
$q_\infty $ $(q_\infty\approx 3{.}23363666 \ldots ) $ such that:
\begin{enumerate}
\item
$g_a(z) \in \mathcal{L-P} \Leftrightarrow \ a^2\geq q_\infty ;$
\item
$g_a(z) \in \mathcal{L-P} \Leftrightarrow \ $  there exists $x_0 \in (- a^3, -a)$ 
such that $ \  g_a(x_0) \leq 0;$
\item
for a given $n\geq 2$ we have $g_{a,n}(z) \in \mathcal{L-P}$ $ \ \Leftrightarrow \ $
there exists $x_n \in (- a^3, -a)$ such that $ \ g_{a,n}(z) \leq 0;$
\item
$ 4 = c_2 > c_4 > c_6 > \cdots $  and    $\lim_{n\to\infty} c_{2n} = q_\infty ;$
\item
$ 3= c_3 < c_5 < c_7 < \cdots $  and    $\lim_{n\to\infty} c_{2n+1} = q_\infty .$
\end{enumerate}}

There is a series of works by V.~P. ~Kostov dedicated to the interesting properties of zeros of 
the partial theta-function and its  derivative (see \cite{kos0, kos1, kos2, kos3, kos03, kos04, kos4, kos5, kos5.0}). 
A wonderful paper \cite{kosshap} among the other results explains the role 
of the constant $q_\infty $  in the study of the set of entire functions with positive 
coefficients having all Taylor truncations with only real zeros. 

A.~D.~Sokal in \cite{sokal} studies the leading roots of the partial theta-function. A formal power series 
\begin{align*}
f(x,y) = \sum_{n=0}^{\infty} \alpha_n x^ny^{n(n-2)/2},
\end{align*}
(where the coefficients $(\alpha_n)_{n=0}^\infty$ belong to a commutative ring with identity element,  
and $\alpha_0 = \alpha_1 = 1$) is considered as a formal power series in $y$ whose coefficients are 
polynomials in $x$. A.~D.~Sokal defines the "leading root" of $f$ as a unique formal power series $x_0(y) \in R[y]$ 
which satisfies the equation $f(x_0(y),y) = 0.$ The coefficientwise positivity of $-x_0(y)$ was proved. Moreover, 
all the coefficients of $1/x_0(y)$ and $1/x_0(y)^2$ after the constant term 1 are strictly negative, except 
for the vanishing coefficient of $y^3$ for the latter case.

In \cite{klv1}, the following questions are investigated: whether the Taylor sections 
of the function $\prod \limits_{k=1}^\infty \left(1 + \frac{z}{a^k} \right)$, where 
$a > 1,$ and $\sum_{k=0}^{\infty}\frac{z^k}{k!a^{k^2}}$ belong to the 
Laguerre--P\'olya class of type I. In \cite{BohVish} and 
\cite{Boh}, some important special functions with increasing sequence 
of the second quotients 
of Taylor coefficients  are studied. 

B.~He in \cite{he} considers the entire function as follows
\begin{align*}
A_q^{(\alpha)}(a;z) = \sum_{k=0}^{\infty} \frac{(a;q)_kq^{\alpha k^2}z^k}{(q;q)_k},
\end{align*}

where $\alpha >0, 0 < q < 1$ and 

\begin{align*}
(a;q)_n = 
	\begin{cases}
	1, n = 0\\
	\prod\limits_{j = 1}^{n-1} (1 - aq^j) (n \geq 1)
	\end{cases}
\end{align*}
is the q-shifted factorial.
The entire function $A_q^{(\alpha)}(a;z)$ defined as above is the generalisation of Ramanujan entire
 function and the Stieltjes-Wigert polynomial which have only real positive zeros. The paper gives a 
 partial answer to Zhang's question: under what conditions the zeros of the entire function 
 $A_q^{(\alpha)}(a;z)$ are all real.

In \cite{ngthv1} it is proved that if $f(z)=\sum_{k=0}^\infty a_k z^k $, $a_k > 0$ 
for all $k,$ is an entire function such that  $q_2 \geq q_3 \geq 
q_4 \geq \cdots, $  and  $\lim\limits_{n\to \infty} q_n(f) = b \geq q_\infty, $
then  $f \in \mathcal{L-P}$.

In \cite{ngthv2} it is proved that if $f(z)=\sum_{k=0}^\infty a_k z^k $, $a_k > 0$ 
for all $k,$ is an entire function such that $q_2 \leq q_3 \leq 
q_4 \leq \cdots, $   and  $\lim\limits_{n\to \infty} q_n(f) = c < q_\infty,$
then the function $f$ does not belong to the  Laguerre--P\'olya class.

The first author studied a special function related to the partial theta-function and 
the Euler function, $f_a(z) = \sum_{k=0}^\infty \frac{z^k}{(a^k+1)(a^{k-1}+1)
\cdot \ldots \cdot (a+1)},$ $a>1,$  which is also known 
as the $q$-Kummer function $\prescript{}{1}{\mathbf{\phi}}_1(q;-q; q,-z)$, 
where $q=1/a$ (see \cite{GR}, formula (1.2.22)), and found the conditions for it to 
belong to the Laguerre--P\'olya class (see \cite{ngth1}).

A.~Baricz and S.~Singh in \cite{BarSingh} investigated the Bessel functions. The Hurwitz 
theorem on the exact number of nonreal zeros was extended for the Bessel functions of the 
first kind. In addition, the results on zeros of derivatives of Bessel functions and the cross-product 
of Bessel functions were obtained.

It turns out that for many important entire functions with positive
coefficients $f(z)=\sum_{k=0}^\infty a_k z^k $ (for example, partial theta-function
from \cite{klv}, functions from \cite{BohVish} and \cite{Boh}, 
the $q$-Kummer function $\prescript{}{1}{\mathbf{\phi}}_1(q;-q; q,-z)$ 
$= \sum_{k=0}^\infty \frac{z^k}{(a^k+1)(a^{k-1}+1)
\cdot \ldots \cdot (a+1)},$ $a>1,$ and others) the following two conditions
are equivalent: 

(i) $f$ belongs to the Laguerre--P\'olya class of type I,  

and 

(ii) there exists $z_0 \in [-\frac{a_1}{a_2},0]$ such that $f(z_0) \leq 0.$

In our previous work, we have obtained a new necessary condition for an entire function to belong to the 
Laguerre--P\'olya class of type I in terms of the closest to zero roots.

{\bf Theorem D} (T. ~H. ~Nguyen, A. ~Vishnyakova, \cite{ngthv3}).
{\it Let $f(z)=\sum_{k=0}^\infty a_k z^k $, $a_k > 0$ for all $k,$  be an 
entire function.  Suppose that the quotients $q_n(f)$ satisfy the following condition: 
$q_2(f) \leq q_3(f).$ If the function $f$ belongs to the  Laguerre--P\'olya class, then 
there exists  $z_0 \in [-\frac{a_1}{a_2},0]$ such that $f(z_0) \leq 0$.} 

Our first result concerns the possible number of nonreal zeros 
of a real entire function whose sequence of  the second quotients 
of Taylor coefficients is  non-decreasing.

\begin{Theorem}
\label{th:mthm2}
Let $f(x) = \sum_{k=0}^\infty  a_k x^k,$
$a_k >0,  k=0, 1, 2, \ldots ,$ be an entire function 
such that $2\sqrt[3]{2} \leq q_2(f) \leq q_3(f) \leq q_4(f) \leq \ldots$
Then all but a finite number of zeros of $f$ are real and
simple.
\end{Theorem}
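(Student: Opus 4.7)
My plan is to bound the number of non-real zeros of the Taylor truncations of $f$ uniformly in the truncation index, then pass to the limit via Hurwitz's theorem. Using $p_n=q_n p_{n-1}$ and the hypothesis $q_n\geq 2\sqrt[3]{2}>1$, iteration gives $p_n\geq p_1(2\sqrt[3]{2})^{n-1}$ and hence, from $a_n=a_0/(p_1 p_2\cdots p_n)$, the estimate $a_n\leq a_0\,p_1^{-n}(2\sqrt[3]{2})^{-n(n-1)/2}$. Thus $f$ is entire of order zero, with Hadamard product of genus zero, and the Taylor truncations $P_N(z):=\sum_{k=0}^N a_k z^k$ converge to $f$ uniformly on compacta. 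By Hurwitz's theorem, the zeros of $f$ in any bounded region are limits of zeros of $P_N$; since a limit of real numbers is real, it suffices to bound the number of non-real zeros of $P_N$ uniformly in $N$.

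\textbf{The localization argument.} The core of the proof is a Rouch\'e-type argument. For each index $k$, set $\rho_k := p_k = a_{k-1}/a_k$. On the circle $|z|=\rho_k$ the two terms $a_{k-1}z^{k-1}$ and $a_k z^k$ have equal modulus while all the others are smaller, so one instead works on slightly perturbed radii $\rho_k^- < \rho_k < \rho_k^+$. The aim is to show that, for every $k$ larger than some fixed $k_0$ (depending on $f$ but not on $N$), the annulus $\{\rho_k^- \leq |z|\leq \rho_k^+\}$ contains exactly one zero of $P_N$. Because $P_N$ has real coefficients and such an annulus is symmetric about the real axis, an \emph{isolated} zero there must itself be real -- otherwise its conjugate would lie in the same annulus, contradicting uniqueness. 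At most $2k_0$ zeros of $P_N$ can escape this dichotomy, yielding the desired uniform bound; and the same ``single zero per annulus'' statement precludes double real zeros for $k\geq k_0$, which covers the simplicity claim.

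\textbf{Main obstacle.} The delicate step is the Rouch\'e comparison itself, where the specific constant $2\sqrt[3]{2}=2^{4/3}$ must enter. Expressing each ratio $a_j/a_k$ as a telescoping product of $q$-quotients and invoking the monotonicity $q_n\leq q_{n+1}$, the Rouch\'e inequality reduces to an explicit polynomial inequality in finitely many consecutive $q_n$'s, which should be valid exactly on the region where all $q_n\geq 2\sqrt[3]{2}$. The exponent $4/3$ is natural here: it reflects the balance of one central term against its two nearest neighbors on each side, with the worst case occurring when three consecutive $q$-values simultaneously equal the threshold. Verifying this sharp inequality -- in effect, checking that $2\sqrt[3]{2}$ is precisely what is needed for the single-zero-per-annulus phenomenon -- is the heart of the matter.
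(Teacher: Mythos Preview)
Your overall architecture---count zeros of $f$ inside a sequence of disks via Rouch\'e, deduce that consecutive annuli contain exactly one zero, and conclude by conjugate symmetry that such a zero must be real and simple---is exactly the paper's route. But two points deserve attention.

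First, the detour through truncations $P_N$ and Hurwitz is unnecessary and slightly hazardous. The paper works directly with $f$: it fixes $j$, applies Rouch\'e on the circle $|z|=\rho_j$ to $f$ itself versus a comparison function, and obtains that $f$ has exactly $j$ zeros in $\{|z|<\rho_j\}$ for all large $j$. Your plan requires the annulus count to hold for $P_N$ uniformly in $N$, including indices $k$ near $N$ where tail estimates break down, and then a passage to the limit; this adds bookkeeping with no payoff.

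Second, and more substantively, you have not identified the comparison function or carried out the estimate, and your choice $\rho_k=p_k$ is not the one that makes the sharp constant emerge. The paper takes $\rho_j=q_2q_3\cdots q_j\sqrt{q_{j+1}}$ (the geometric mean of $p_j$ and $p_{j+1}$ after normalizing $p_1=1$) and compares $f$ not with a single dominant term but with a \emph{five-term} central block. After a further symmetrization the block becomes, in the variable $w=z/\rho_j$, the self-reciprocal quartic
\[
P_j(w)=1-q_j\sqrt{q_{j+1}}\,w+q_jq_{j+1}\,w^2-q_j\sqrt{q_{j+1}}\,w^3+w^4,
\]
whose non-vanishing on $|w|=1$ reduces to showing $\widetilde\psi_j(t)=4t^2-2q_j\sqrt{q_{j+1}}\,t+(q_jq_{j+1}-2)>0$ on $[-1,1]$. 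This is where $2\sqrt[3]{2}$ actually enters: the parabola's vertex sits at $t_j=q_j\sqrt{q_{j+1}}/4$, and $t_j\ge 1$ is exactly $q_j\sqrt{q_{j+1}}\ge 4$, which in the monotone worst case $q_j=q_{j+1}$ gives $q_j^{3/2}\ge 4$, i.e.\ $q_j\ge 2^{4/3}$. Self-reciprocity then forces $P_j$ to have precisely two roots in $|w|<1$, hence the comparison function has exactly $j$ zeros in $|z|<\rho_j$. None of this mechanism---the geometric-mean radius, the five-term self-reciprocal block, the parabola-vertex computation---is visible in your sketch, and your heuristic ``balance of one central term against its two nearest neighbors'' does not produce the constant. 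As it stands, the ``heart of the matter'' you flag is not just delicate but absent.
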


In connection with the theorem above, we formulate the following conjecture.

{\bf Conjecture.}  {\it  Let $f(x) = \sum_{k=0}^\infty  a_k x^k,$
$a_k >0,  k=0, 1, 2, \ldots ,$ be an entire function 
such that $1 < q_2(f) \leq q_3(f) \leq q_4(f) \leq \ldots$
Then all but a finite number of zeros of $f$ are real and
simple.}

The second result of our paper is the following  criterion for belonging to the 
Laguerre--P\'olya class of type I for real entire
functions with the regularly non-decreasing sequence of second quotients 
of Taylor coefficients.  To clarify the statement of the next theorem we need the following 
lemma from \cite{ngthv3}.

\begin{Lemma}(Lemma 1.2 from \cite{ngthv3}, cf. Lemma 2.1 from \cite{ngthv2}).
\label{lem0}
If $f(z) = \sum_{k=0}^\infty a_kz^k, a_k>0,$ belongs to $\mathcal{L-P}I,$ 
then $q_3(q_2 - 4) + 3 \geq 0.$
In particular, if $q_3 \geq q_2,$ then $q_2 \geq 3.$
\end{Lemma}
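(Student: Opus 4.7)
The plan is to reduce the assertion to a symmetric-function inequality in non-negative reals, which I then intend to verify by an explicit algebraic identity. First, clearing denominators (using $a_0, a_1, a_3 > 0$), the inequality $q_3(q_2-4)+3 \geq 0$ is equivalent to
\begin{equation*}
a_1^2 a_2 + 3 a_0 a_1 a_3 - 4 a_0 a_2^2 \geq 0.
\end{equation*}
By Theorem~A(iv), $f$ is the locally uniform limit of a sequence of polynomials $(P_n)$ with only real non-positive zeros; since $P_n(0) \to a_0 > 0$, we may write $P_n(z) = P_n(0) \prod_j (1 + x_{j,n} z)$ with $x_{j,n} \geq 0$. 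The first four Taylor coefficients of $P_n$ converge to those of $f$, so it suffices to establish the displayed inequality for a single polynomial $P(z) = c \prod_{j=1}^{m} (1 + x_j z)$ with $c > 0$ and $x_j \geq 0$.

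For such $P$, the coefficients are $a_k = c\, e_k(x_1, \dots, x_m)$, and the inequality reduces to $e_1^2 e_2 + 3 e_1 e_3 - 4 e_2^2 \geq 0$. The central identity I expect to establish is
\begin{equation*}
e_1^2 e_2 + 3 e_1 e_3 - 4 e_2^2 \;=\; \sum_{1 \leq i < j \leq m} x_i x_j (x_i - x_j)^2,
\end{equation*}
which makes the non-negativity manifest since each $x_j \geq 0$. The identity can be checked by expanding each of the three products on the left in the basis of monomial symmetric polynomials $m_{(3,1)}$, $m_{(2,1,1)}$, $m_{(2,2)}$, and $e_4$; the $m_{(2,1,1)}$ and $e_4$ contributions cancel, leaving $m_{(3,1)} - 2 m_{(2,2)}$, and this regroups as the stated sum via $x_i^3 x_j + x_i x_j^3 - 2 x_i^2 x_j^2 = x_i x_j (x_i - x_j)^2$. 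I expect this bookkeeping to be the main (though routine) obstacle: guessing the compact closed form before computing is the non-trivial step, since standard Newton-type inequalities alone give the wrong direction when one tries to bound $e_1 e_3$ from below in terms of $e_2^2$.

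For the \emph{in particular} clause, suppose $q_3 \geq q_2$. If $q_2 \geq 4$ the conclusion is trivial. Otherwise $q_2 - 4 < 0$, so $q_3(q_2 - 4) \leq q_2(q_2 - 4)$, and the main inequality yields $q_2(q_2 - 4) + 3 \geq 0$, i.e.\ $(q_2 - 1)(q_2 - 3) \geq 0$. To rule out $q_2 \leq 1$, I invoke the elementary bound $q_2 \geq 2$, which follows from the same polynomial approximation together with the identity $e_1^2 = \sum_j x_j^2 + 2 e_2 \geq 2 e_2$. Hence $q_2 \geq 3$, as asserted.
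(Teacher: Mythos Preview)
Your argument is correct. The reduction to $a_1^2 a_2 + 3 a_0 a_1 a_3 - 4 a_0 a_2^2 \geq 0$, the passage to approximating polynomials via Theorem~A(iv), and the identity
\[
e_1^2 e_2 + 3 e_1 e_3 - 4 e_2^2 \;=\; m_{(3,1)} - 2 m_{(2,2)} \;=\; \sum_{i<j} x_i x_j (x_i - x_j)^2
\]
all check out (your bookkeeping in the monomial basis is accurate). The deduction of $q_2 \geq 3$ from $(q_2-1)(q_2-3) \geq 0$ together with the elementary bound $q_2 \geq 2$ is also fine.

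As for comparison: this paper does not actually supply a proof of Lemma~\ref{lem0}; the statement is imported verbatim from \cite{ngthv3} (their Lemma~1.2), so there is no in-paper argument to compare against. Your self-contained symmetric-function proof is therefore a genuine addition rather than a reworking of something present here. One minor stylistic alternative that avoids the approximation step: since $f \in \mathcal{L-P}I$ has the product form $c\, e^{\beta z}\prod_k (1 + z/x_k)$ with $\beta \geq 0$, $x_k > 0$, one can expand the first four coefficients directly in terms of $\beta$ and the power sums $\sum x_k^{-j}$ and obtain the same sum-of-squares expression; but the polynomial-approximation route you chose is equally clean and keeps the computation finite.
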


So, if we investigate whether a real entire function, with the non-decreasing 
sequence of second quotients of Taylor coefficients, belongs to the 
Laguerre--P\'olya class of type I, then the necessary condition is $q_2 \geq 3.$
Our main result is the following theorem.

\begin{Theorem}
\label{th:mthm1}
Let $f(z)=\sum_{k=0}^\infty a_k z^k$, $a_k > 0$ for all $k,$  be an 
entire function. Suppose that  
$3 \leq q_2(f) \leq q_3(f) \leq q_4(f) \leq \ldots$. Suppose also that
if there is an integer $j_0 \geq 2,$ such that $q_{j_0}(f) < 4,$ and  
$q_{j_0+1}(f) \geq 4,$  then $\frac{q_{j_0 -1}(f)}{q_{j_0+1}(f)} \geq 0{.}525,$ 
or $q_{j_0}(f) \geq 3{.}4303. $ 
Then $f \in \mathcal{L-P}I$ if 
and only if there exists $z_0 \in [-a_1/a_2,0]$ such that $f(z_0) \leq 0.$
\end{Theorem}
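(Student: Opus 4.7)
The \emph{only if} direction is immediate from Theorem~D: the monotonicity $q_2(f) \leq q_3(f) \leq q_4(f) \leq \ldots$ included in the hypothesis implies in particular $q_2(f) \leq q_3(f)$, which is the sole condition needed in that theorem. So the substance of the proof is the converse. Assume there exists $z_0 \in [-a_1/a_2, 0]$ with $f(z_0) \leq 0$; the plan is to show $f \in \mathcal{L-P}I$ by contradiction.

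First, since $q_2(f) \geq 3 > 2\sqrt[3]{2}$, Theorem~\ref{th:mthm2} applies and $f$ has only finitely many non-real zeros, which occur in complex conjugate pairs. We may therefore factor $f(z) = g(z)\,h(z)$, where $g$ is a real polynomial whose roots are exactly the non-real zeros of $f$ (so $g(x)>0$ for all $x \in \mathbb{R}$) and $h \in \mathcal{L-P}I$ has positive Taylor coefficients $\{b_k\}_{k\geq 0}$. If $g$ is constant we are done, so suppose $\deg g \geq 2$. Let $-r_1$ be the zero of $h$ of smallest modulus; then $h$ is strictly positive on $(-r_1, 0]$, while $f(z_0) = g(z_0)h(z_0) \leq 0$ forces $h(z_0) \leq 0$, so $z_0 \leq -r_1$. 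Together with the standard bound $r_1 \geq b_0/b_1$ for functions in $\mathcal{L-P}I$ with positive Taylor coefficients, this gives the chain
\[
\frac{b_0}{b_1} \leq r_1 \leq |z_0| \leq \frac{a_1}{a_2}.
\]

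The rest of the proof must show that this bottleneck inequality is incompatible with the hypotheses on the $q_n(f)$. Using the convolution $a_n = \sum_{i+j=n} c_i b_j$, where $g(z)=\sum_i c_i z^i$ with $c_0,c_{\deg g}>0$, one expresses $a_1/a_2$ and the first few $q_n(f)$ as rational functions in the ratios $c_i/c_0$ and in the early $b_j/b_0$. By a reduction which peels off one conjugate pair of zeros at a time, producing at each step a new entire function that still satisfies the hypotheses of the theorem, it should suffice to treat the case $\deg g = 2$. Writing $g(z) = c_0 + c_1 z + c_2 z^2$ with $c_1^2 < 4c_0 c_2$, the inequality $b_0/b_1 \leq a_1/a_2$ becomes an explicit algebraic relation in $c_1/c_0,\,c_2/c_0$ and the first several $q_n(h)$, which are themselves constrained (via the convolution and the bounds $q_n(f) \geq 3$ from Lemma~\ref{lem0}) by the prescribed $q_n(f)$.

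The hard part will be the final quantitative analysis near the transition index $j_0$ at which $q_{j_0}(f) < 4 \leq q_{j_0+1}(f)$. For $n > j_0$ Hutchinson's threshold controls the tail of $h$ and hence constrains the late $b_j/b_0$ very tightly; for $n < j_0$ only the soft bound $q_n(f) \geq 3$ is available. The critical index is $j_0$ itself, where both $q_{j_0-1}(f)$ and $q_{j_0+1}(f)$ enter the algebraic relation nontrivially. I expect the precise numerical thresholds $0{.}525$ and $3{.}4303$, together with the role of the ratio $q_{j_0-1}/q_{j_0+1}$, to arise as the sharp cut-offs of the region in $(q_{j_0-1},q_{j_0},q_{j_0+1})$-space on which the bottleneck inequality can be ruled out; carrying this out cleanly will likely require a delicate extremal optimization, and is the main obstacle of the proof.
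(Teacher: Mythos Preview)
Your proposal diverges substantially from the paper's argument, and the parts you leave open are not merely technical details but the heart of the proof.

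The paper does not factor $f = gh$ and argue by contradiction. Instead, after passing to $\varphi(z) = f(-z)$ (normalized so $a_0 = a_1 = 1$), it proves directly that $\varphi$ alternates sign at an explicit sequence of test points. Setting $\rho_k = q_2 q_3 \cdots q_k \sqrt{q_{k+1}}$, the key lemma shows $(-1)^k \varphi(\rho_k) \geq 0$ for every $k \geq 2$. Combined with $\varphi(0) > 0$ and $\varphi(z_0) \leq 0$, this produces at least $k-1$ real zeros of $\varphi$ in the disk $\{|z| < \rho_k\}$. Since Lemma~\ref{th:lm3} (the Rouch\'e-type count underlying Theorem~\ref{th:mthm2}) gives exactly $k$ zeros in that disk for all large $k$, and nonreal zeros come in conjugate pairs, all zeros of $\varphi$ are real.

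The numerical thresholds $0{.}525$ and $3{.}4303$ arise entirely inside the sign-alternation lemma: one bounds $(-1)^k \varphi(\rho_k)$ from below by a seven-term alternating sum, reduces to an explicit inequality in $q_{k-1}, q_k, q_{k+1}, q_{k+2}, q_{k+3}$, and splits into cases according to whether $q_{k+1} \geq 4$ (easy), or $q_{k+1} < 4$ with $q_k/q_{k+2} \geq 0{.}525$ (reduces to $t^5 - 2t^4 + 1{.}525\,t - 2/9 \geq 0$ with $t=\sqrt{q_{k+1}}$), or $q_{k+1} < 4$ with $q_{k+1} \geq 3{.}4303$ (reduces to $t^5 - 2t^4 + t - 1/9 \geq 0$). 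The extra hypothesis in the theorem is exactly what is needed to handle the single index at which the sequence $(q_n)$ crosses $4$.

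Your plan, by contrast, has two genuine gaps. First, the reduction to $\deg g = 2$ by ``peeling off one conjugate pair at a time'' is unjustified: dividing $f$ by a real quadratic factor will in general destroy both the monotonicity $q_2 \leq q_3 \leq \cdots$ and the threshold condition at $j_0$, so there is no reason the intermediate functions still satisfy the hypotheses. Second, even in the case $\deg g = 2$, you give no indication of how to carry out the promised extremal optimization, and your guess about the origin of $0{.}525$ and $3{.}4303$ is off: they are not cut-offs of a feasibility region for a convolution inequality, but roots of explicit one-variable polynomials that fall out of a direct sign estimate of $\varphi$ at the radii $\rho_k$.
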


Unfortunately, at the moment we do not know whether the additional
assumptions in the above theorem  are essential. 

\section{Proof of Theorem \ref{th:mthm2}}

\begin{proof} To prove Theorem \ref{th:mthm2} we need the
following Lemma.

\begin{Lemma}
\label{th:lm3}
Let $f(x) = \sum_{k=0}^\infty (-1)^k a_k x^k,$
$a_k >0,  k=0, 1, 2, \ldots ,$ be an entire function 
such that $2\sqrt[3]{2} \leq q_2(f) \leq q_3(f) \leq q_3(f) \leq \ldots$ 
For an arbitrary integer $j \geq 2$ we define 
$$\rho_j(f) := q_2(f)q_3(f) \cdots  q_j(f) \sqrt{q_{j+1}(f)}.$$ 
Then, for all sufficiently large $j$,  the function $f$ has 
exactly $j$ zeros on the disk $\{z\  :\   |z| <  \rho_j(f) \}$
counting multiplicities.
\end{Lemma}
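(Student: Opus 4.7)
The plan is to apply Rouché's theorem on the circle $\{|z|=\rho_j\}$, comparing $f$ with the monomial $(-1)^j a_j z^j$, whose only zero is at the origin and has exact multiplicity $j$. To set up the comparison, I would first interpret $\rho_j$ geometrically: writing $p_n := a_{n-1}/a_n$ and recalling that $p_n = p_1 q_2 q_3\cdots q_n$, the prescribed radius $\rho_j = q_2 q_3 \cdots q_j\sqrt{q_{j+1}}$ is (up to the normalising constant $p_1$) the geometric mean of the consecutive convergents $p_j$ and $p_{j+1}$. Hence, setting $T_k := a_k \rho_j^k$, an immediate computation gives $T_{j-1}/T_j = T_{j+1}/T_j = q_{j+1}^{-1/2}$, so the $j$-th Taylor term is the unique term of maximum modulus on the circle.

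The next step is to control the tails away from the central index $j$ by an induction that uses the monotonicity of $(q_m)$ and the hypothesis $q_m\geq q_2\geq 2\sqrt[3]{2}$. Multiplying the successive ratios $T_{k\pm 1}/T_k$ and telescoping, one obtains Gaussian-type bounds
\[
\frac{T_{j-\ell}}{T_j} \;\leq\; q_2^{-\ell^2/2}, \qquad \frac{T_{j+\ell}}{T_j} \;\leq\; q_{j+1}^{-\ell^2/2} \;\leq\; q_2^{-\ell^2/2}, \qquad \ell\geq 1.
\]
Were the Rouché inequality $\sum_{k\neq j} T_k < T_j$ valid on $|z|=\rho_j$, it would follow that $f$ has the same number of zeros as $(-1)^j a_j z^j$ inside $|z|<\rho_j$, namely exactly $j$ counted with multiplicity, and the phrase ``for all sufficiently large $j$'' would merely absorb finitely many small-index edge cases.

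\emph{The main obstacle} is that the naive term-by-term bound barely fails at the critical threshold: $2\sum_{\ell\geq 1}(2\sqrt[3]{2})^{-\ell^2/2}\approx 1.6>1$, and already the two neighbouring terms $T_{j\pm 1}$ alone dominate $T_j$ whenever $q_{j+1}<4$. Hence the proof cannot rely on the triangle inequality for the full sum of moduli; it must exploit the alternating-sign structure of $f$. The natural refinement is to retain the actual phases of the two neighbours and compare $f$ with the three-term polynomial
\[
(-1)^{j-1}a_{j-1}z^{j-1} + (-1)^j a_j z^j + (-1)^{j+1}a_{j+1}z^{j+1},
\]
whose two nontrivial roots have product $a_{j-1}/a_{j+1}$ and therefore have modulus exactly $\sqrt{a_{j-1}/a_{j+1}}=\rho_j$ when $q_{j+1}<4$, i.e.\ they sit precisely on the comparison circle. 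One would then lower-bound this three-term polynomial on $|z|=\rho_j$ uniformly away from its critical angles and estimate the genuinely small non-neighbour tail $\sum_{|k-j|\geq 2} T_k \leq 2\sum_{\ell\geq 2} q_2^{-\ell^2/2}\approx 0.34$ against that lower bound. I expect the specific value $2\sqrt[3]{2}$ to be exactly the threshold at which this refined comparison closes; pushing it through rigorously (perhaps after an infinitesimal perturbation of the radius to move the critical quadratic roots off the circle, followed by a Hurwitz-type limit) is where the technical heart of the argument will lie.
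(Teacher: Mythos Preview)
Your diagnosis is accurate: the single monomial fails because $2q_{j+1}^{-1/2}$ may exceed $1$, and your three-term fix is the natural next attempt. But the three-term comparison is fatally degenerate. After normalisation its modulus on $|z|=\rho_j$ is exactly
\[
T_j\,\Bigl|\,1-\frac{2\cos\theta}{\sqrt{q_{j+1}}}\,\Bigr|,
\]
which \emph{vanishes} at $\cos\theta=\sqrt{q_{j+1}}/2$ whenever $q_{j+1}\le 4$ --- precisely the regime you flagged as difficult. An ``infinitesimal perturbation of the radius'' raises this minimum only to order $\varepsilon\,T_j$, which can never dominate the non-neighbour tail $\approx 0.34\,T_j$ that you yourself computed; so the perturbation/Hurwitz device cannot close the gap, and neither can any angle-dependent refinement of a three-term core whose minimum is literally zero.

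The paper's remedy is to keep \emph{five} central terms ($k=j-2,\dots,j+2$) and adjust the coefficient of the $(j{+}2)$-term slightly so that, in the variable $w=z/\rho_j$, the comparison polynomial becomes the self-reciprocal quartic
\[
P_j(w)=1-q_j\sqrt{q_{j+1}}\,w+q_jq_{j+1}\,w^2-q_j\sqrt{q_{j+1}}\,w^3+w^4.
\]
On $|w|=1$ this collapses to the real expression $\psi_j(\theta)=2\cos 2\theta-2q_j\sqrt{q_{j+1}}\cos\theta+q_jq_{j+1}$, whose minimum $2-2q_j\sqrt{q_{j+1}}+q_jq_{j+1}$ is strictly positive; the hypothesis $q_2\ge 2\sqrt[3]{2}$ enters exactly here, as the condition $q_j\sqrt{q_{j+1}}/4\ge 1$ that pushes the parabola's vertex outside $[-1,1]$. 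The tails are now indexed by $|k-j|\ge 3$ (plus the small correction term), and these fit under the explicit minimum for all large $j$. Finally, self-reciprocity forces $P_j$ to have exactly two roots in $|w|<1$, whence the comparison polynomial has exactly $j$ zeros in $|z|<\rho_j$. The five-term palindromic core, not a perturbed three-term quadratic, is the missing idea.
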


\begin{proof}
For simplicity, we will write $q_j$ instead of $q_j(f)$ and  $\rho_j$
instead of $\rho_j(f).$  We have
\begin{align*}
f(z) = \sum_{k=0}^\infty \frac{(-1)^k z^k}{q_2^{k-1}q_3^{k-2} \cdots q_k},
\end{align*}
where the sequence $q_2, q_3, \ldots$ is non-decreasing.
We now dissect the above sum as
\begin{align*}
\sum_{k=0}^\infty \frac{(-1)^k z^k}{q_2^{k-1}q_3^{k-2} \cdots q_k} = 
\bigg(\sum_{k=0}^{j-3} + \sum_{k = j-2}^{j+2} + 
\sum_{k = j+3}^{\infty} \bigg)=: \Sigma_{1,j}(z) + g_j(z) + \Sigma_{2,j}(z),
\end{align*}
where
\begin{eqnarray}
\label{ll1}
& g_j(z) = \left( \sum_{k = j - 2}^{j+1} 
\frac{(-1)^kz^k}{q_2^{k-1} q_3^{k-2} \cdots q_k} 
+ \frac{(-1)^{j+2}z^{j+2}}{q_2^{j+1}q_3^j 
\cdots q_{j-2}^5q_{j-1}^4q_j^4q_{j+1}^2}\right) \\
\nonumber &
+\frac{(-1)^{j+2}z^{j+2}}{q_2^{j-3}q_3^{j-4} 
\cdots q_{j-2}} \bigg( \frac{1}{q_2^4q_3^4 
\cdots q_{j-1}^4q_j^3q_{j+1}^2q_{j+2}} - 
\frac{1}{q_2^4q_3^4 \cdots q_{j-1}^4q_j^4q_{j+1}^2}\bigg) \\
\nonumber & =: \widetilde{g}_j(z) + \xi_j(z).
\end{eqnarray}

By the definition of $\rho_j$ we have $q_2q_3 \cdots q_j  < 
\rho_j < q_2q_3 \cdots q_jq_{j+1}$.
We get
\begin{eqnarray}
\label{ll2}
&
(-1)^{j-2}g_j(\rho_je^{i\theta}) = 
e^{i(j-2)\theta}q_2q_3^2 \cdots q_{j-2}^{j-3}q_{j-1}^{j-2}q_j^{j-2}
q_{j+1}^{\frac{j-2}{2}}  \cdot \\ \nonumber &
 \bigg( 1 - e^{i\theta}q_j \sqrt{q_{j+1}}
+ e^{2i\theta}q_jq_{j+1} - e^{3i\theta}q_j\sqrt{q_{j+1}} + 
e^{4i\theta}q_jq_{j+2}^{-1}\bigg) \\  \nonumber &
= e^{i(j-2)\theta}q_2q_3^2 \cdots 
q_{j-2}^{j-3}q_{j-1}^{j-2}q_j^{j-2}q_{j+1}^{\frac{j-2}{2}} \cdot  \\
\nonumber &  \bigg( 1 - e^{i\theta}q_j \sqrt{q_{j+1}}
+ e^{2i\theta}q_jq_{j+1} - e^{3i\theta}q_j\sqrt{q_{j+1}} + 
e^{4i\theta}\bigg) \\   \nonumber &
+ e^{i(j+2)\theta}q_2q_3^2 \cdots 
q_{j-2}^{j-3}q_{j-1}^{j-2}q_j^{j-2}q_{j+1}^{\frac{j-2}{2}} 
\bigg(q_jq_{j+2}^{-1} - 1\bigg) \\  \nonumber &
= \widetilde{g}_j(\rho_je^{i\theta}) + \xi_j(\rho_je^{i\theta}).
\end{eqnarray}

Our aim is to show that for every sufficiently large $j$ the following inequality holds:  
\begin{align*}
\min_{0\leq \theta\leq 2\pi}|\widetilde{g}_j(\rho_j e^{i\theta})| > 
\max_{0\leq \theta\leq 2\pi}|f
(\rho_j e^{i\theta}) -\widetilde{g}_j(\rho_j e^{i\theta})|, 
\end{align*} 
so that the number of zeros of  $f$  in the circle $\{z : |z| < \rho_j  \}$  
is equal to  the number of zeros of $\widetilde{g}_j$ in the same circle. 
Later in the proof, we also find the number of zeros. 
First, we find $\min_{0 \leq \theta \leq 2\pi}  |\widetilde{g}_j(\rho_je^{i\theta})|.$
\begin{eqnarray}
\label{ll3}
&
\widetilde{g}_j(\rho_je^{i\theta}) 
=e^{ij\theta}q_2q_3^2 \cdots q_{j-2}^{j-3}q_{j-1}^{j-2}
q_j^{j-2}q_{j+1}^{\frac{j-2}{2}} \cdot \\  \nonumber &
 \bigg( e^{-2i\theta} - e^{-i\theta}q_j \sqrt{q_{j+1}} + q_jq_{j+1} -
e^{i\theta}q_j \sqrt{q_{j+1}}  + e^{2i\theta}\bigg)\\
 \nonumber & = e^{ij\theta}q_2q_3^2 \cdots 
q_{j-2}^{j-3}q_{j-1}^{j-2}q_j^{j-2}q_{j+1}^{\frac{j-2}{2}} \cdot \\
 \nonumber & \bigg( 2\cos 2\theta - 
2\cos\theta q_j \sqrt{q_{j+1}}
+ q_jq_{j+1} \bigg) \\
 \nonumber & =: e^{ij\theta}q_2q_3^2 \cdots q_{j-2}^{j-3}
q_{j-1}^{j-2}q_j^{j-2}q_{j+1}^{\frac{j-2}{2}} 
\cdot  \psi_j(\theta).
\end{eqnarray}

We consider $\psi_j(\theta)$ as following
\begin{align*}
\psi_j(\theta) =\widetilde{\psi}_j(t) := 4t^2 - 2q_j
\sqrt{q_{j+1}}t + (q_jq_{j+1} - 2),
\end{align*}
where $t:= \cos \theta,$ and where we have used 
that $\cos 2\theta = 2t^2 - 1.$

The vertex of the parabola is $t_j = q_j\sqrt{q_{j+1}}/4.$ Under our assumptions, 
$2\sqrt[3]{2} \leq q_2 \leq q_3 \leq \ldots,$  so that $q_j\sqrt{q_{j+1}}/4 \geq q_2\sqrt{q_{2}}/4
\geq 2\sqrt[3]{2} \sqrt{2\sqrt[3]{2}}/4 =1, $ and we have $t_j \geq 1.$
Hence, 
$$\min_{t \in [-1, 1]} \widetilde{\psi}_j(t) = \widetilde{\psi}_j(1) =
2 - 2q_j\sqrt{q_{j+1}} + q_j q_{j+1} = q_j\sqrt{q_{j+1}}(\sqrt{q_{j+1}} -2)  + 2.$$
If  $q_{j+1} \geq 4,$ then $q_j\sqrt{q_{j+1}}(\sqrt{q_{j+1}} -2)  + 2>0. $
If $q_{j+1} <4$, then $q_j\sqrt{q_{j+1}}(\sqrt{q_{j+1}} -2)  + 2 \geq 
q_{j+1}\sqrt{q_{j+1}}(\sqrt{q_{j+1}} -2)  + 2 =q_{j+1}^2 - 2 q_{j+1}
\sqrt{q_{j+1}} +2. $ Denote by $y = \sqrt{q_{j+1}} \geq 0,$ and
$g(y) = y^4 - 2 y^3 +2.$ It is easy to calculate that $\min_{y \geq 0} g(y) =
g(\frac{3}{2}) = \frac{5}{16} >0.$
Therefore,  we get
$$2 - 2q_j\sqrt{q_{j+1}} + q_j q_{j+1}> 0.$$
Thus, $\widetilde{\psi}_j(t) > 0$ for all $t \in [-1, 1]$.
Consequently, we have obtained the estimate from below:
\begin{eqnarray}
\label{estg}
& \min_{0\leq \theta\leq 2\pi}|\widetilde{g}_j(\rho_j e^{i\theta})| \geq q_2q_3^2 \cdots 
q_{j-2}^{j-3}q_{j-1}^{j-2}q_j^{j-2}q_{j+1}^{\frac{j-2}{2}} \cdot 
\\ \nonumber &  \bigg(2 - 2q_j\sqrt{q_{j+1}} 
+ q_jq_{j+1} \bigg).
\end{eqnarray}

Second, we estimate the modulus of $\Sigma_1$ from above. We have
\begin{eqnarray}
\label{ll4}
&
|\Sigma_1(\rho_j e^{i \theta})| 
\leq  \sum_{k = 0}^{j-3} \frac{q_2^kq_3^k \cdots
q_j^k q_{j+1}^{\frac{k}{2}}}{q_2^{k-1}q_3^{k-2} \cdots q_k}= 
\\   \nonumber  &(\mbox{we rewrite 
the sum from right} 
 \mbox{  to left})\\   \nonumber &
 = \left( q_2 q_3^2\cdots q_{j-3}^{j-4}q_{j-2}^{j-3}q_{j-1}^{j-3} q_j^{j-3} 
q_{j+1}^{\frac{j-3}{2}} + \right.
\left. q_2 q_3^2 \cdots q_{j-4}^{j-5} q_{j-3}^{j-4}q_{j-2}^{j-4} 
q_{j-1}^{j-4} q_j^{j-4} q_{j+1}^{\frac{j-4}{2}}  \right. \\
 \nonumber & + \left.   q_2 q_3^2 \cdots q_{j-5}^{j-6} q_{j-4}^{j-5}
q_{j-3}^{j-5}q_{j-2}^{j-5}q_{j-1}^{j-5} q_j^{j-5} 
q_{j+1}^{\frac{j-5}{2}} +\cdots    \right) \\
 \nonumber & = q_2 q_3^2 \cdots q_{j-3}^{j-4}q_{j-2}^{j-3}q_{j-1}^{j-3} 
 q_j^{j-3} q_{j+1}^{\frac{j-3}{2}} \\
 \nonumber & \cdot  \left( 1+ \frac{1}{q_{j-2}q_{j-1}q_{j}
 \sqrt{q_{j+1}}}\right. \left.    +  \frac{1}{q_{j-3}q_{j-2}^2 
 q_{j-1}^2q_{j}^2(\sqrt{q_{j+1}})^2} +
 \cdots\right) \\
 \nonumber &  \leq   q_2 q_3^2 \cdots q_{j-3}^{j-4}q_{j-2}^{j-3}
 q_{j-1}^{j-3} q_j^{j-3}q_{j+1}^{\frac{j-3}{2}}\cdot 
 \frac{1}{1- \frac{1}{q_{j-2}q_{j-1}q_{j}\sqrt{q_{j+1}}}}
\end{eqnarray}

 (we estimate the finite sum  from above by the sum of the infinite geometric 
 progression). Finally, we obtain
\begin{eqnarray}
& |\Sigma_1(\rho_j e^{i \theta})|\leq  \\
\nonumber & q_2 q_3^2 \cdots q_{j-3}^{j-4}q_{j-2}^{j-3} 
q_{j-1}^{j-3} q_j^{j-3} q_{j+1}^{\frac{j-3}{2}}\cdot 
 \frac{1}{1- \frac{1}{q_{j-2}q_{j-1}q_{j}\sqrt{q_{j+1}}}} .
\end{eqnarray}

Next, the estimation of $|\Sigma_2(\rho_j e^{i\theta})|$ from above can be made analogously:
\begin{align*}
&  |\Sigma_2(\rho_j e^{i \theta})|
\leq  \sum_{k = j+3}^\infty \frac{q_2^kq_3^k \cdots q_j^k q_{j+1}^{\frac{k}{2}}}{q_2^{k-1}q_3^{k-2} \cdots q_k} 
= \frac{q_2q_3^2 \cdots q_j^{j-1} q_{j+1}^{\frac{j-3}{2}}}{q_{j+2}^2q_{j+3}}\cdot \\
& \bigg( 1 + \frac{1}{\sqrt{q_{j+1}}q_{j+2}q_{j+3}q_{j+4}} + \frac{1}{(\sqrt{q_{j+1}})^2 
q_{j+2}^2q_{j+3}^2q_{j+4}^2q_{j+5}} + \cdots \bigg).
\end{align*}
The latter can be estimated from above by the sum of the geometric progression, so, we obtain
\begin{equation}
|\Sigma_2(\rho_j e^{i \theta})|\leq \frac{q_2q_3^2 \cdots q_j^{j-1} 
q_{j+1}^{\frac{j-3}{2}}}{q_{j+2}^2 q_{j+3}}
\cdot \frac{1}{1 - \frac{1}{\sqrt{q_{j+1}}q_{j+2} q_{j+3} q_{j+4}}}.
\end{equation}
Note that
$$|\xi_j(\rho_j e^{i \theta})| =  q_2q_3^2 \cdots q_{j-2}^{j-3}q_{j-1}^{j-2}q_j^{j-2}
q_{j+1}^{\frac{j-2}{2}} \bigg(1- q_jq_{j+2}^{-1} \bigg).$$

Therefore, the desired inequality $\min_{0\leq \theta\leq 2\pi}|\widetilde{g}_j(\rho_j e^{i\theta})| > 
\max_{0\leq \theta\leq 2\pi}|f (\rho_j e^{i\theta}) -\widetilde{g}_j(\rho_j e^{i\theta})|$ 
follows from
\begin{align*}
& q_2q_3^2 \cdots q_{j-2}^{j-3}q_{j-1}^{j-2}q_j^{j-2}q_{j+1}^{\frac{j-2}{2}} 
\cdot  \bigg(2 - 2q_j\sqrt{q_{j+1}} + q_jq_{j+1} \bigg) \\ 
& > q_2 q_3^2 \cdots q_{j-3}^{j-4}q_{j-2}^{j-3} 
q_{j-1}^{j-3} q_j^{j-3} q_{j+1}^{\frac{j-3}{2}}
\cdot \frac{1}{1- \frac{1}{q_{j-2}q_{j-1}q_{j}\sqrt{q_{j+1}}}} \\
&+ \frac{q_2q_3^2 \cdots q_j^{j-1} 
q_{j+1}^{\frac{j-3}{2}}}{q_{j+2}^2 q_{j+3}}
\cdot \frac{1}{1 - \frac{1}{\sqrt{q_{j+1}}q_{j+2} q_{j+3} q_{j+4}}} \\
&+q_2q_3^2 \cdots q_{j-2}^{j-3}q_{j-1}^{j-2}q_j^{j-2}
q_{j+1}^{\frac{j-2}{2}} \bigg(1 - q_jq_{j+2}^{-1} \bigg).
\end{align*} 
Or, equivalently, 
\begin{eqnarray}
\label{estqq}
& q_{j-1}q_j\sqrt{q_{j+1}} \bigg(2 - 2q_j\sqrt{q_{j+1}} + q_jq_{j+1}\bigg) >
\\  \nonumber &
\frac{1}{1 - \frac{1}{q_{j-2}q_{j-1}q_j\sqrt{q_{j+1}}}} + 
\frac{q_{j-1}q_j^2}{q_{j+2}^2q_{j+3}} \frac{1}{1 - 
\frac{1}{\sqrt{q_{j+1}}q_{j+2}q_{j+3}q_{j+4}}} +
\\  \nonumber & q_{j-1}q_j\sqrt{q_{j+1}}(1 -q_jq_{j+2}^{-1}).
\end{eqnarray}

Since, under our assumptions $q_2 \leq q_3 \leq q_4 \leq \ldots,$
the sequence $(q_j)_{j=2}^\infty$ has the limit, that is finite or infinite.
At first, we consider the case when this limit is finite and put 
$\lim_{j \to \infty} q_j = a,$ $a \geq 2\sqrt[3]{2}.$  We firstly investigate the limiting inequality
\begin{equation}
\label{esta} 
a^2\sqrt{a}(2 - 2a\sqrt{a} + a^2) > \frac{1}{1 - \frac{1}{a^3\sqrt{a}}} + 
\frac{1}{1 - \frac{1}{a^3\sqrt{a}}} + a^2\sqrt{a} \cdot 0.
\end{equation}
Equivalently, 
\begin{align*}
2 - 2a\sqrt{a} + a^2> \frac{2a}{a^3\sqrt{a}-1}.
\end{align*}
Set $\sqrt{a} =: b$, then we obtain  $(2 - 2b^3 + b^4)(b^7 - 1) > 2b^2,$
or
\begin{align*}
b^{11} - 2b^{10} + 2b^7 - b^4 + 2b^3 - 2b^2 - 2 > 0.
\end{align*}

We have found the roots of the polynomial on the left-hand side of the inequality 
using the computer, and its greatest real root is less than $ 1{.}47.$ Thus, 
the inequality is fulfilled for $b > 1{.}47$, and, therefore,  for $a > 2{.}17.$
Under our assumptions, $a \geq 2\sqrt[3]{2} > 2{.}51,$ so the inequality (\ref{esta})
is valid. Whence, for the case when 
the sequence $(q_j)_{j=2}^\infty$ has the finite limit,
 the inequality (\ref{estqq})
is valid  for all $j$ being large enough.

Now we consider the case when $\lim_{j \to \infty} q_j = + \infty.$
The inequality  (\ref{estqq}) follows from the 
\begin{eqnarray}
\label{estqq2}
& q_{j-1}q_j\sqrt{q_{j+1}} \bigg(2 - 2q_j\sqrt{q_{j+1}} + q_jq_{j+1}\bigg) >
\\  \nonumber &
\frac{1}{1 - \frac{1}{q_{j-2}q_{j-1}q_j\sqrt{q_{j+1}}}} + 
 \frac{1}{1 - 
\frac{1}{\sqrt{q_{j+1}}q_{j+2}q_{j+3}q_{j+4}}} +
\\  \nonumber & q_{j-1}q_j\sqrt{q_{j+1}},
\end{eqnarray}
or
$$
2 - 2q_j\sqrt{q_{j+1}} + q_jq_{j+1}>
\frac{1}{q_{j-1}q_j\sqrt{q_{j+1}}}\cdot \frac{1}{1 - 
\frac{1}{q_{j-2}q_{j-1}q_j\sqrt{q_{j+1}}}} + $$
$$\frac{1}{q_{j-1}q_j\sqrt{q_{j+1}}} \cdot \frac{1}{1 - 
\frac{1}{\sqrt{q_{j+1}}q_{j+2}q_{j+3}q_{j+4}}} +1.
$$
The left-hand side of the inequality above tends to
infinity, and the right-hand side tends to $1.$ So, 
the last inequality is valid for all $j$ being large enough.  
Whence, for the case when the sequence $(q_j)_{j=2}^\infty$ 
has the infinite limit,  the inequality (\ref{estqq})
is valid  for all $j$ being large enough.

Consequently, we have proved that for all $j$ being large enough 
$\min_{0\leq \theta\leq 2\pi}|\widetilde{g}_j(\rho_j e^{i\theta})| > 
\max_{0\leq \theta\leq 2\pi} |f(\rho_j e^{i\theta}) -\widetilde{g}_j(\rho_j e^{i\theta})|, $ 
so the number of zeros of 
$f$ in the circle $\{z:|z| < \rho_j\}$ is equal to the number of zeros of 
$\widetilde{g}_j$ in this circle.

In the next stage of the proof, it remains to find the number of zeros 
of $\widetilde{g}_j$ in the circle $\{z:|z| < \rho_j\}$. We have 

$$\widetilde{g}_j(z) = \sum_{k = j - 2}^{j+1} \frac{(-1)^kz^k}{q_2^{k-1} q_3^{k-2} \cdots q_k} + 
\frac{(-1)^{j+2}z^{j+2}}{q_2^{j+1}q_3^j \cdots q_{j-2}^5q_{j-1}^4q_j^4q_{j+1}^2}.$$

Let us use the denotation $w = z \rho_j^{-1},$ so that $|w|<1.$ This yields

$$\widetilde{g}_j(\rho_j w) = (-1)^{j-2}w^{j-2}q_2q_3^2 \cdots 
q_{j-2}^{j-3}q_{j-1}^{j-2}q_j^{j-2}q_{j+1}^{\frac{j-2}{2}} \cdot $$
$$(1 - q_j\sqrt{q_{j+1}}w + q_jq_{j+1}w^2 - q_j\sqrt{q_{j+1}}w^3 + w^4).$$

It follows from (\ref{estg}) that $\widetilde{g}_j$ does not have zeros on the circumference $\{z : |z| = \rho_j  \},$
whence $\widetilde{g}_j(\rho_j w)$ does not have zeros on the circumference $\{w : |w| = 1  \}.$ Since 
$P_j(w) =1-q_j\sqrt{q_{j+1}} w +q_j q_{j+1} w^2 - q_j\sqrt{q_{j+1}} w^3 +w^4 $  is a 
self-reciprocal polynomial in $w,$ we can conclude that $P_j$ has exactly two zeros in the circle $\{ w :|w| <1 \}.$
Hence, $\widetilde{g}_j(z)$ has exactly $j$ zeros in the circle $\{z : |z| < \rho_j  \},$ and we have 
proved the statement of  Lemma~\ref{th:lm3}.
\end{proof}

Theorem~\ref{th:mthm2} is a simple corollary of Lemma~\ref{th:lm3}.

\end{proof}

\section{Proof of Theorem \ref{th:mthm1}}

Without loss of generality, we can assume that $a_0=a_1=1,$ since we can 
consider a function $\psi(z) =a_0^{-1} f (a_0 a_1^{-1}z) $  instead of 
$f(z),$ due to the fact that such rescaling of $f$ preserves its property of 
having real zeros and preserves the second quotients:  $q_n(\psi) =q_n(f)$ 
for all $n.$ For brevity, during the proof we write $p_n$ and $q_n$ instead of 
$p_n(f)$ and $q_n(f).$ Then, we have $$f(z) = 1 + z + \sum_{k=2}^\infty 
\frac{ z^k}{q_2^{k-1} q_3^{k-2} \ldots q_{k-1}^2 q_k}.$$ 

Further, during the proof, we need the inequalities related to the roots of the 
function $f.$ So, for the convenience of dealing with inequalities, we are going to consider the positive roots. Thus, we consider the entire function 

\begin{align*}
\varphi(z) = f(-z) = 1 - z + \sum_{k=2}^\infty  \frac{ (-1)^k z^k}
{q_2^{k-1} q_3^{k-2} \ldots q_{k-1}^2 q_k}
\end{align*}
instead of $f.$

In \cite{ngthv3}, it was proved that if $\varphi \in \mathcal{L-P}$ and $q_2(f) \leq 
q_3(f),$ then there exists $z_0 \in (0; \frac{a_1}{a_2}] = (0, q_2]$ such that 
$\varphi(z_0) \leq 0$ (see Theorem D in the introduction). It remains to prove the 
inverse statement. To do this  we need the following lemma.

\begin{Lemma}
\label{th:lm2} According to Lemma~\ref{th:lm2}, we denote 
by $\rho_k = \rho_k(\varphi) := q_2(\varphi)q_3(\varphi) \cdots q_k(\varphi) 
\sqrt{q_{k+1}(\varphi)},$ 
$k\in\mathbb{N}.$
Under the assumptions of Theorem \ref{th:mthm1}, for every $k\geq 2$  
the following inequality holds: 
$$(-1)^k \varphi(\rho_k) \geq 0.$$
\end{Lemma}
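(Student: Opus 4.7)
The plan is to recognise $\varphi(\rho_k)$ as an alternating series whose term magnitudes $|T_m|:=a_m\rho_k^m$ are strictly unimodal with peak at $m=k$, and then to exploit a symmetric two-sided Leibniz estimate around that peak.

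First I would record the ratio $|T_{m+1}/T_m|=\rho_k/(q_2q_3\cdots q_{m+1})$, which exceeds $1$ for $m<k$ and equals $1/(\sqrt{q_{k+1}}\,q_{k+2}\cdots q_{m+1})<1$ for $m\ge k$; in particular $|T_{k-1}|=|T_{k+1}|=|T_k|/\sqrt{q_{k+1}}$. I would then pair the terms symmetrically around the peak by setting $s_j:=|T_{k-j}|+|T_{k+j}|$ (with $|T_{k-j}|=0$ when $j>k$), obtaining
$$(-1)^k\varphi(\rho_k)=\sum_{j=0}^\infty(-1)^js_j,\qquad s_0=|T_k|,\quad s_1=\frac{2|T_k|}{\sqrt{q_{k+1}}}.$$
A direct computation using $q_j\ge 3$ for all $j$ gives $s_{j+1}/s_j\le 1/(3\sqrt 3)<1$ whenever $j\ge 1$. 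Hence $(s_j)_{j\ge 1}$ is strictly decreasing, and the standard Leibniz upper bound for the alternating tail yields
$$(-1)^k\varphi(\rho_k)\ \ge\ s_0-s_1+s_2-s_3,$$
so it suffices to prove the closed-form inequality
$$(\star)\quad 1-\frac{2}{\sqrt{q_{k+1}}}+\frac{1}{q_{k+1}}\Bigl(\frac{1}{q_k}+\frac{1}{q_{k+2}}\Bigr)-\frac{1}{q_{k+1}^{3/2}}\Bigl(\frac{1}{q_{k-1}q_k^2}+\frac{1}{q_{k+2}^2 q_{k+3}}\Bigr)\ge 0,$$
where any $q$-factor corresponding to a non-existent Taylor coefficient (e.g.\ $q_{k-1}$ when $k=2$) is replaced by $+\infty$, removing the corresponding fraction.

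When $q_{k+1}\ge 4$ the stronger $s_0\ge s_1$ also holds, the full sequence $(s_j)_{j\ge 0}$ is decreasing, and Leibniz gives $(-1)^k\varphi(\rho_k)\ge s_0-s_1\ge 0$ immediately; this disposes of every index $k$ with $k+1>j_0$ (and every $k$ at all when no transition $j_0$ exists and $q_2\ge 4$). The non-trivial regime is $q_{k+1}<4$, i.e.\ $k+1\le j_0$. In the sub-case $k+1<j_0$ one also has $q_{k+2}<4$, so $(\star)$ is ``doubly symmetric''; at the extremal constant instance $q_j\equiv 3$ the left-hand side equals $\approx 0{.}053>0$, and monotonicity of each term of $(\star)$ in each of $q_{k-1},q_k,q_{k+1},q_{k+2},q_{k+3}$ should then propagate positivity over the whole admissible region.

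The main obstacle is the asymmetric transition index $k=j_0-1$, where $q_{k+1}=q_{j_0}<4$ but $q_{k+2}=q_{j_0+1}\ge 4$: the contribution $1/q_{k+2}$ to the positive $+s_2$ term is suppressed while the negative $-2/\sqrt{q_{k+1}}$ stays large, and $(\star)$ becomes tight. Here the extra hypothesis of Theorem~\ref{th:mthm1} must intervene. The first alternative $q_{j_0-1}/q_{j_0+1}\ge 0{.}525$ caps $q_{k+2}$ in terms of $q_k$, so that $1/q_{k+2}$ retains enough weight in $s_2$; the second alternative $q_{j_0}\ge 3{.}4303$ makes $2/\sqrt{q_{k+1}}$ small enough that the deficit is absorbed by $1-2/\sqrt{q_{k+1}}+1/(q_kq_{k+1})$ alone. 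I expect the numerical constants $0{.}525$ and $3{.}4303$ to emerge as precisely the threshold values at which the minimum of the left-hand side of $(\star)$, over the admissible parameter region subject to one of the two alternative constraints, attains zero; since $q_{k-1}$ and $q_{k+3}$ only appear in the small negative correction, monotonicity should collapse the verification to a one- or two-parameter boundary optimisation in $(q_k,q_{k+1},q_{k+2})$.
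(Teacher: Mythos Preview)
Your reduction is essentially the paper's own: the symmetric pairing $s_j=|T_{k-j}|+|T_{k+j}|$ and the Leibniz cutoff at $s_3$ reproduce exactly the seven-term inequality the paper calls $\nu_k(\rho_k)\ge 0$; indeed your $(\star)$ is obtained from $\nu_k(\rho_k)\ge 0$ by dividing through by $q_{k-1}q_k^2q_{k+1}^{3/2}$. The easy case $q_{k+1}\ge 4$ is handled the same way in both.

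The gap is in how you propose to verify $(\star)$ when $q_{k+1}<4$. Your claim that the left-hand side of $(\star)$ is monotone in each variable is false: a short computation gives
\[
\frac{\partial}{\partial q_k}\Bigl(\frac{1}{q_{k+1}q_k}-\frac{1}{q_{k+1}^{3/2}q_{k-1}q_k^2}\Bigr)
=\frac{1}{q_{k+1}q_k^2}\Bigl(-1+\frac{2}{\sqrt{q_{k+1}}\,q_{k-1}q_k}\Bigr)<0,
\]
and the analogous derivative in $q_{k+2}$ is also negative. Thus the left-hand side of $(\star)$ is \emph{decreasing} in $q_k$ and in $q_{k+2}$, so positivity at the constant instance $q_j\equiv 3$ does not propagate, and the ``doubly symmetric'' sub-case $q_{k+2}<4$ cannot be closed by that argument. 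For the same reason the transition case does not collapse to a pure boundary check in the way you anticipate.

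What the paper does instead is to exploit the hypothesis uniformly: when $q_{k+2}<4$ one has $q_k/q_{k+2}\ge 3/4>0.525$, so \emph{both} sub-cases with $q_{k+1}<4$ satisfy $1+q_k/q_{k+2}\ge 1.525$. Replacing $1+q_k/q_{k+2}$ by $1.525$, bounding the $s_3$-term by $q_{k-1}q_k^2/(q_{k+2}^2q_{k+3})\le 1$ and $2\le \tfrac{2}{9}q_{k-1}q_k$, and then using $q_k\le q_{k+1}$ in the remaining negative part, reduces $(\star)$ to the single-variable inequality
\[
t^5-2t^4+1.525\,t-\tfrac{2}{9}\ \ge\ 0,\qquad t=\sqrt{q_{k+1}}\ge\sqrt{3},
\]
whose largest real root is $\approx 1.7305<\sqrt{3}$. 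Under the alternative hypothesis $q_{j_0}\ge 3.4303$ one simply discards the $q_{k+2}$-contributions (they form a positive block) and the same bounds give
\[
t^5-2t^4+t-\tfrac{1}{9}\ \ge\ 0,
\]
whose largest real root is $\approx 1.8521$, i.e.\ $q_{k+1}\ge 3.4303$. So the constants $0.525$ and $3.4303$ are not thresholds of a boundary optimisation of $(\star)$ itself, but of these univariate reductions obtained after the crude bounds above. Your framework is correct up to and including $(\star)$; to finish, replace the monotonicity step by these substitutions.
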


\begin{proof}
Since  $ \rho_k  \in (q_2q_3 \cdots q_k, q_2q_3 \cdots q_kq_{k+1}),$ we have 
\begin{align*}
 1< \rho_k < \frac{\rho_k^2}{q_2} < \cdots < \frac{\rho_k^k}{q_2^{k-1}q_3^{k-2} \cdots q_k},  
 \end{align*}
and
\begin{align*}
 \frac{\rho_k^k}{q_2^{k-1}q_3^{k-2} \cdots q_k} > \frac{\rho_k^{k+1}}{q_2^{k}q_3^{k-1} \cdots q_k^2 q_{k+1}} 
> \frac{\rho_k^{k+2}}{q_2^{k+1}q_3^{k} \cdots q_k^3 q_{k+1}^2 q_{k+2}} > \cdots.
\end{align*}
Therefore, we get for $k\geq 2$
\begin{align*}
(-1)^k\varphi(\rho_k) \geq \sum_{j=k-3}^{k+3}\frac{(-1)^{j+k}\rho_k^j}{q_2^{j-1} q_3^{j-2} \cdots q_j} =: \mu_k(\rho_k),
\end{align*}
and it is sufficient to prove that for every $k\geq 2$ we have $\mu_k(\rho_k)\geq 0.$  
After factoring out $(\rho_k^{k-3})(q_2^{k-4} q_3^{k-5} \cdots q_{k-3})$ the desired inequality takes the form:
\begin{align*}
-1 + \frac{\rho_k}{q_2q_3 \cdots q_{k-3}q_{k-2}} - \frac{\rho_k^2}{q_2^2q_3^2 
\cdots q_{k-2}^2q_{k-1}} + \frac{\rho_k^3}{q_2^3q_3^3 \cdots q_{k-2}^3q_{k-1}^2q_k} \\
-\frac{\rho_k^4}{q_2^4q_3^4 \cdots q_{k-2}^4q_{k-1}^3q_k^2q_{k+1}} + 
\frac{\rho_k^5}{q_2^5q_3^5 \cdots q_{k-2}^5q_{k-1}^4q_k^3q_{k+1}^2q_{k+2}} \\
-\frac{\rho_k^6}{q_2^6q_3^6 \cdots q_{k-2}^6q_{k-1}^5q_k^4q_{k+1}^3q_{k+2}^2q_{k+3}} 
\geq 0,
\end{align*}
or, using that $\rho_k= q_2q_3 \cdots q_k \sqrt{q_{k+1}},$
\begin{align*}
&  \nu_k(\rho_k) := -1 + q_{k-1}q_k\sqrt{q_{k+1}} - 2q_{k-1}q_k^2q_{k+1} + 
q_{k-1}q_k^2q_{k+1}\sqrt{q_{k+1}}\\  &
+ \frac{q_{k-1}q_k^2\sqrt{q_{k+1}}}{q_{k+2}} - 
\frac{q_{k-1}q_k^2}{q_{k+2}^{2}q_{k+3}} \geq 0.
\end{align*}
We observe that
\begin{align*}
& \nu_k(\rho_k) =  (q_{k-1}q_k\sqrt{q_{k+1}} -1) + 
q_{k-1}q_k^2q_{k+1}(\sqrt{q_{k+1}}- 2) + 
\\  &
+\frac{q_{k-1}q_k^2\sqrt{q_{k+1}}}{q_{k+2}} (1 - 
\frac{1}{\sqrt{q_{k+1}} q_{k+2}q_{k+3}}).
\end{align*}

At first, we consider the case when $q_{k+1} \geq 4.$ Then we have
$(q_{k-1}q_k\sqrt{q_{k+1}} -1) >0,$ $q_{k-1}q_k^2q_{k+1}(\sqrt{q_{k+1}}- 2)
\geq 0,$ and $\frac{q_{k-1}q_k^2\sqrt{q_{k+1}}}{q_{k+2}} (1 - 
\frac{1}{\sqrt{q_{k+1}} q_{k+2}q_{k+3}}) >0.$ Thus, in the case
$q_{k+1} \geq 4$ the desired inequality $\nu_k(\rho_k) \geq 0$ is
proved.

Now we consider the case when $q_{k+1} < 4$ and either $q_{k+2} < 4$
(so that $\frac{q_{k}}{q_{k+2}} \geq \frac{3}{4} \geq 0{.}525$),
or  $q_{k+2} \geq 4$  and $\frac{q_{k}}{q_{k+2}} \geq 0{.}525.$  

After rearranging we get 
\begin{align*}
\nu_k(\rho_k) = q_{k-1}q_k^2q_{k+1}\sqrt{q_{k+1}} - 2q_{k-1}q_k^2q_{k+1}\\
+q_{k-1}q_k\sqrt{q_{k+1}} \bigg( 1 + \frac{q_k}{q_{k+2}}\bigg) - \bigg(1 + 
\frac{q_{k-1}q_k^2}{q_{k+2}^2q_{k+3}}\bigg) \geq 0.
\end{align*}

Since $q_k$ are non-decreasing  in $k$, it is easy to see that $(q_{k-1}q_k^2)/(q_{k+2}^2q_{k+3}) 
\leq 1$, hence, it is  sufficient to prove the following inequality
\begin{align*}
q_{k-1}q_k^2q_{k+1}\sqrt{q_{k+1}} - 2q_{k-1}q_k^2q_{k+1}+
q_{k-1}q_k\sqrt{q_{k+1}} \bigg( 1 + \frac{q_k}{q_{k+2}}\bigg) - 2 \geq 0.
\end{align*}

Under our assumptions that $q_k$ are non-decreasing in $k$ and $q_2 \geq 3$, we 
have $2 < \frac{2}{9}q_{k-1}q_k$, and we can observe that
\begin{align*}
q_{k-1}q_k^2q_{k+1}\sqrt{q_{k+1}} &- 2q_{k-1}q_k^2q_{k+1}
+ q_{k-1}q_k\sqrt{q_{k+1}} \cdot 1{.}525 - 2\\ 
&\geq q_{k-1}q_k^2q_{k+1}\sqrt{q_{k+1}} - 2q_{k-1}q_k^2q_{k+1}\\
&+ q_{k-1}q_k\sqrt{q_{k+1}} \cdot 1{.}525 - \frac{2}{9}q_{k-1}q_k.
\end{align*}
So, we need to check that for all $k\geq 2$ 
\begin{align*}
&  q_kq_{k+1}\sqrt{q_{k+1}} - 2q_kq_{k+1} + 1{.}525\sqrt{q_{k+1}} - \frac{2}{9}  =
\\  &
q_kq_{k+1}\left(\sqrt{q_{k+1}} - 2\right) + 1{.}525\sqrt{q_{k+1}} - \frac{2}{9} \geq 0.
\end{align*}
Since $q_k$ is non-decreasing in $k$, we get
\begin{align*}
q_kq_{k+1}\sqrt{q_{k+1}} - 2q_kq_{k+1} + 1{.}525\sqrt{q_{k+1}} - \frac{2}{9} \geq\\
q_{k+1}^2\sqrt{q_{k+1}} - 2q_{k+1}^2 + 1{.}525\sqrt{q_k+1} - \frac{2}{9} .
\end{align*}

Set $\sqrt{q_{k+1}} = t, t \geq 0$, then we obtain the following inequality
\begin{align*}
t^5 - 2t^4 + 1{.}525t - \frac{2}{9} \geq 0.
\end{align*}
This inequality holds for $t \geq 1{.}73051$  (we used numerical methods 
 to find that the greatest real root of the polynomial on the left-hand 
side is less than $ 1{.}73051$), so it follows that it holds
for $q_{k+1} \geq 2{.}99466\ldots$. Thus, in the case
 $q_{k+1} < 4$ and either $q_{k+2} < 4,$
or  $q_{k+2} \geq 4$  and $\frac{q_{k}}{q_{k+2}} \geq 0{.}525$   
the desired inequality $\nu_k(\rho_k) \geq 0$ is
proved.

It remains to consider the case when  $q_{k+1} < 4, $  $q_{k+2} \geq 4,$
and $q_{k+1} \geq 3{.}4303. $ We have 
\begin{eqnarray} \nonumber
& \nu_k(\rho_k) :=  (q_{k-1}q_k\sqrt{q_{k+1}} -1) + 
q_{k-1}q_k^2q_{k+1}(\sqrt{q_{k+1}}- 2) + 
\\  & \nonumber
+\frac{q_{k-1}q_k^2\sqrt{q_{k+1}}}{q_{k+2}} (1 - 
\frac{1}{\sqrt{q_{k+1}} q_{k+2}q_{k+3}}) \geq 
 (q_{k-1}q_k\sqrt{q_{k+1}} -1) + \\   \nonumber &
q_{k-1}q_k^2q_{k+1}(\sqrt{q_{k+1}}- 2) \geq  
(q_{k-1}q_k\sqrt{q_{k+1}} - \frac{q_{k-1}q_k}{9}) +
 \\  \nonumber & q_{k-1}q_k^2q_{k+1}(\sqrt{q_{k+1}}- 2)
= q_{k-1}q_k \left( (\sqrt{q_{k+1}} - \frac{1}{9}) +
\right. 
\\ \nonumber & \left.
q_k q_{k+1} (\sqrt{q_{k+1}}- 2) \right).
\end{eqnarray}
We want to show that 
$$  (\sqrt{q_{k+1}} - \frac{1}{9}) + q_k q_{k+1}(\sqrt{q_{k+1}}- 2) \geq 0.$$
Since $\sqrt{q_{k+1}}- 2 <0,$ and $q_k \leq q_{k+1},$ the last inequality
follows from 
$$  (\sqrt{q_{k+1}} - \frac{1}{9}) + q_{k+1}^2(\sqrt{q_{k+1}}- 2) \geq 0.$$
Denote by $t = \sqrt{q_{k+1}},$ we get the inequality 
$$t^5 -2 t^4 +t -  \frac{1}{9} \geq 0. $$
We have found the roots of the polynomial on the left-hand side of the inequality 
using the computer, and its greatest real root is less than $1{.}8521.$ Thus, 
this inequality valid for $q_{k+1} \geq 3{.}4303. $ So, in the case 
when  $q_{k+1} < 4, $  $q_{k+2} \geq 4,$
and $q_{k+1} \geq 3{.}4303 $ the desired inequality $\nu_k(\rho_k) \geq 0$ is
also proved.

Lemma~\ref{th:lm2} is proved. 
\end{proof}

Suppose that there  exists $z_0 \in (1, q_2),$ such that $\varphi(z_0) \leq 0.$ Then,
by Lemma~\ref{th:lm2}  we have  for every $k\geq 2:$
\begin{align*}
\varphi(0) >0,   \varphi(z_0) \leq 0,  \varphi(\rho_2) \geq 0,  \varphi(\rho_3) \leq 0, \ldots, 
(-1)^k \varphi(\rho_k) \geq 0. 
\end{align*}
So, for every $k\geq 2$ the function $ \varphi$ has at least $k-1$ real zeros on the disk
$\{ z : |z| < \rho_k   \}.$ By Lemma~\ref{th:lm3}, the function $ \varphi$  has exactly
$k$ zeros on the disk $\{ z : |z| < \rho_k \}$ for $k$ being large enough.  So, for all
$k$ being large enough all the zeros of $ \varphi$ on the disk $\{ z : |z| < \rho_k \}$ 
are real.  Thus,  if there  exists $z_0 \in (1, q_2),$ such that $\varphi(z_0) \leq 0,$ then all the
zeros of $\varphi$ are real, thus $\varphi \in \mathcal{L-P}I$.

Theorem~\ref{th:mthm1} is proved. 

{\bf Aknowledgement.} \thanks{ The research was supported  by  the National Research Foundation of Ukraine funded by Ukrainian State budget in frames of project 2020.02/0096 ``Operators in infinite-dimensional spaces:  the interplay between geometry, algebra and topology''.}

\end{document}